\documentclass[12pt,a4paper,oneside]{amsart}

\usepackage[left=18mm,right=18mm,top=23mm,bottom=13mm,footskip=6.2mm]{geometry}
\usepackage[sc]{mathpazo}
\usepackage[T1]{fontenc}
\usepackage{amsmath,amssymb,mathtools}
\usepackage{parskip}
\usepackage{etoolbox}
\usepackage{microtype}
\usepackage{xcolor}
\usepackage[normalem]{ulem}

\definecolor{cancelred}{HTML}{D9534F}
\definecolor{matchblue}{HTML}{0275D8}
\definecolor{matchgreen}{HTML}{5CB85C}
\definecolor{highlightpurple}{HTML}{6F42C1}
\definecolor{CheckGreen}{RGB}{34,139,34}
\definecolor{WrightYellow}{RGB}{218,165,32}
\definecolor{linkviolet}{RGB}{112,60,160}
\definecolor{citeblue}{RGB}{35,82,145}

\usepackage{hyperref}
\hypersetup{
  colorlinks=true,
  linkcolor=linkviolet,
  citecolor=citeblue,
  urlcolor=citeblue,
  pdftitle={Isomorphic one-relator groups need not have relators of equal stable commutator length},
  pdfauthor={Artem Semidetnov},
  pdfsubject={One-relator groups and stable commutator length}
}

\patchcmd{\section}{\scshape}{\large\bfseries}{}{}
\patchcmd{\abstract}{\leftmargin3pc}{\leftmargin2pc}{}{}
\makeatletter
\renewcommand{\@secnumfont}{\bfseries}
\makeatother
\patchcmd{\thebibliography}{\section*}{\paragraph}{}{}

\DeclareMathOperator{\scl}{scl}
\newcommand{\nc}[1]{\langle\!\langle #1 \rangle\!\rangle}
\newcommand{\genrel}[2]{\langle #1 \mid #2 \rangle}
\newcommand{\blue}[1]{\textcolor{matchblue}{#1}}

\theoremstyle{plain}
\newtheorem{theoremA}{Theorem}

\newtheorem{theorem}{Theorem}
\newtheorem{lemma}[theorem]{Lemma}
\newtheorem{corollary}[theorem]{Corollary}
\theoremstyle{definition}

\newtheorem*{Question}{Question}

\title[Stable commutator length of a relator]{The stable commutator length of a relator\\ is not a one-relator group invariant}
\author{Artem Semidetnov}
\address{Universit\'{e} de Gen\`{e}ve, Section de math\'{e}matiques, rue du Conseil-G\'{e}n\'{e}ral 7--9, 1205 Gen\`{e}ve, Switzerland}
\email{artemsemidetnov@gmail.com, Artem.Semidetnov@etu.unige.ch}
\date{}

\begin{document}

\vspace*{-15mm}

\begin{abstract}
We show that, for the words~\mbox{{\scriptsize \(r=\mathtt{aabABabABBAbaabABBAb}\)}} and~\mbox{{\scriptsize \(r'=\mathtt{aabABabABabABBAbaBAb}\)}}, both in the commutator subgroup of the free group on~\(\mathtt{a}\) and~\(\mathtt{b}\), the one-relator groups~\mbox{\(\langle \mathtt{a,b}\mid r=1\rangle\)} and~\mbox{\(\langle \mathtt{a,b}\mid r'=1\rangle\)} are isomorphic, whereas~\(r\) and~\(r'\) have distinct stable commutator lengths. This provides a negative answer to a question posed by Heuer and L\"oh.
\end{abstract}

\maketitle

Let \(S\) be a set and let \(F(S)\) be the free group on \(S\). For \(w\in F(S)'\), write \(\operatorname{cl}_S(w)\) for its commutator length and set
\[
  \scl_S(w):=\lim_{n\to\infty}\frac{\operatorname{cl}_S(w^n)}{n}.
\]
Duncan and Howie proved the gap theorem [3, Theorem~3.3]:
\[
  w\in F(S)'\setminus\{e\}\quad\Longrightarrow\quad \scl_S(w)\geq \frac12.
\]
Moreover, stable commutator length in free groups is rational and computable [2]. Heuer and L\"oh asked whether this quantity can be recovered from the corresponding one-relator group.

\begin{Question}[{\normalfont [4, Question~1.3(1)]}]
Let \(S,S'\) be sets and let \(r\in F(S)'\setminus\{e\}\), \(r'\in F(S')'\setminus\{e\}\) be relators with \(\genrel{S}{r}\cong\genrel{S'}{r'}\). Does this imply that \(\scl_S r=\scl_{S'}r'\)?
\end{Question}

For a non-trivial \(r\in F(S)'\), put
\[
  G_r:=\genrel{S}{r}=F(S)/\nc r.
\]
Following [4], let \(\lVert G_r\rVert\) denote the \(\ell^1\)-seminorm of the fundamental class of \(G_r\), called the simplicial volume of \(G_r\). Heuer and L\"oh proved the universal estimate [4, Corollary~3.12]:
\[
  \lVert G_r\rVert<4\scl_S(r).
\]
They also posed the following question.

\begin{Question}[{\normalfont [4, Question~1.2]}]
For which non-trivial \(r\in F(S)'\) does one have
\[
  \lVert G_r\rVert=4\left(\scl_S(r)-\frac12\right)?
\]
\end{Question}

We give a single example showing that the stable commutator length of a relator is not determined by its one-relator group and, at the same time, that the equality in Question~1.2 need not hold.

\begin{theoremA}\label{thm:main}
Let \(S=S'=\{\mathtt a,\mathtt b\}\) and
\[
  r=\mathtt{aabABabABBAbaabABBAb},\qquad
  r'=\mathtt{aabABabABabABBAbaBAb}.
\]
The words \(r,r'\in F(S)'\setminus\{e\}\) are cyclically reduced and have length~\(20\). Moreover,
\[
  G_r\cong G_{r'},\qquad \scl_S(r)=1,
  \qquad\text{and}\qquad \scl_S(r')=\frac12.
\]
In particular, the answer to [4, Question~1.3(1)] is negative.
\end{theoremA}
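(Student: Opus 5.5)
The plan has three independent parts: the isomorphism \(G_r\cong G_{r'}\), the computation \(\scl_S(r')=\frac12\), and the computation \(\scl_S(r)=1\). Before that comes the trivial bookkeeping. Both words have exponent sum zero in \(\mathtt a\) and in \(\mathtt b\), so they lie in \(F(S)'\). Reading them cyclically, no letter is followed by its inverse, so they are cyclically reduced, and each has length~\(20\).

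Since \(\scl\) is invariant under automorphisms of \(F(S)\), the isomorphism cannot come from an automorphism of \(F(\mathtt a,\mathtt b)\) carrying \(r\) to \(r'^{\pm1}\). It has to use the relation itself. Writing \(x=[\mathtt a,\mathtt b]=\mathtt{abAB}\), one reads off
\[
r=\mathtt a\,x^2\,\mathtt{BAba}\,x\,\mathtt{BAb},\qquad r'=\mathtt a\,x^3\,\mathtt{BAba}\,\mathtt{BAb},
\]
so the two relators differ by moving one copy of \(x\) past the factor \(\mathtt{BAba}\). I would look for a homomorphism \(\varphi\colon F(S)\to G_{r'}\) with \(\mathtt a\mapsto \mathtt a\) and \(\mathtt b\mapsto \mathtt b\,u\), where \(u\) is a short word chosen so that the relation \(r'=1\) makes the conjugated pieces commute as required. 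I would then check that \(\varphi(r)=1\) in \(G_{r'}\). Next I would construct an inverse \(\psi\colon F(S)\to G_r\) of the same shape and verify \(\psi(r')=1\). Finally I would verify that \(\psi\circ\varphi\) and \(\varphi\circ\psi\) fix \(\mathtt a\) and \(\mathtt b\) modulo the respective relators. Equivalently, this can be phrased as a sequence of Tietze transformations that passes through a two-relator presentation. Finding the right \(u\) is the main obstacle. I would first search, by computer, short words \(u\) in \(\mathtt a\), \(\mathtt b\), \(x\) and test the resulting maps with a word-problem solver for one-relator groups (Magnus' method). Once such maps are found, the verification is a finite word computation.

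For \(\scl_S(r')=\frac12\), the Duncan--Howie gap theorem [3, Theorem~3.3] already gives \(\scl_S(r')\ge\frac12\). For the upper bound I would exhibit \(r'\) as a single commutator \([g,h]\) in \(F(S)\), found by a search or by a genus-one admissible surface. Then the standard inequality \(\scl(w)\le \operatorname{cl}(w)-\frac12\) (Bavard, Calegari) gives \(\scl_S(r')\le\frac12\).

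For \(\scl_S(r)=1\), both bounds need certificates, and I would rely on Calegari's algorithm [2]. For the upper bound, I would exhibit an explicit monotone admissible surface, a union of polygons glued along the arcs of \(r^n\), with \(-\chi/(2n)=1\). For example, this could be a genus-\(\frac{3}{2}n\)-type surface for \(n=2\), checked by hand. For the lower bound, Calegari's linear program has a rational optimum. I would extract a dual feasible solution and present it as a certificate that every admissible surface satisfies \(-\chi^-/2n\ge 1\). Alternatively, I would give an extremal counting quasimorphism \(\phi\) of defect \(D\) with \(\phi(r)/(2D)=1\), using Bavard duality. Checking the defect of such a quasimorphism (or the dual LP feasibility) is the laborious but mechanical step.

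Combining the three parts gives \(G_r\cong G_{r'}\) with \(\scl_S(r)=1\ne\frac12=\scl_S(r')\), which is the negative answer to [4, Question~1.3(1)].
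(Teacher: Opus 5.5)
Your outline has the same architecture as the paper's proof: a pair of homomorphisms shown to be mutually inverse modulo the relators (exactly the paper's certificate lemma), Duncan--Howie plus an explicit commutator for $r'$, and Calegari's algorithm for $r$. But it stops where the proof begins. None of the certificates is produced, and for a statement about two specific words they are the entire content.

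The decisive gap is the isomorphism. You leave it open (``finding the right $u$ is the main obstacle''), and your ansatz $\mathtt a\mapsto\mathtt a$, $\mathtt b\mapsto\mathtt b u$ does not have the shape of the isomorphism that exists. Up to inner automorphisms, the paper's maps are $\mathtt a\mapsto[\mathtt a,\mathtt b]\mathtt a$, $\mathtt b\mapsto\mathtt b[\mathtt b,\mathtt a]$ from $G_r$ to $G_{r'}$, and $\mathtt a\mapsto\mathtt{aBAba}$, $\mathtt b\mapsto\mathtt{ABabb}$ back. Both generators move, and it is $\mathtt{ab}$ (up to conjugacy) that is preserved, not $\mathtt a$. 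Your own decomposition nearly shows why. Put $x=[\mathtt a,\mathtt b]$, $y=\mathtt{ab}$ and $z=y^{-1}xy=\mathtt{ABab}$, so that $\mathtt{BAba}=z^{-1}$. Then $\mathtt Ar'\mathtt a=x^3z^{-2}$ and $\mathtt Ar\mathtt a=x^2z^{-1}xz^{-1}$, hence $y^{-1}x^2y=x^3$ in $G_{r'}$. The map $\mathtt a\mapsto x\mathtt a$, $\mathtt b\mapsto \mathtt bx^{-1}$ sends $y\mapsto xyx^{-1}$, $z\mapsto x^2$ and $x\mapsto xvx$ with $v=yx^{-1}y^{-1}$. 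Therefore $\mathtt Ar\mathtt a\mapsto x(vx^2v^2)x^{-1}=1$ in $G_{r'}$.

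There is also a concrete reason your search may come up empty. Since $r\equiv r'$ modulo $F''$, the two groups have literally the same metabelian quotient, with $G'/G''\cong\mathbb Z[s^{\pm1},t^{\pm1}]/(3st-2)\cong\mathbb Z[\tfrac16][s^{\pm1}]$, where $s,t$ are the images of $\mathtt a,\mathtt b$. The isomorphism above acts on this module by $\tfrac43$. A map of your shape must induce the identity on $H_1$, because of the Alexander polynomial $3st-2$. It then acts by a unit whose value at $s=1$ is $1$, whereas inner automorphisms only contribute factors whose value there is $(\tfrac23)^j$. So a map of your shape exists only if $G_r$ has a non-inner automorphism acting trivially on $H_1$, and nothing in your plan provides one.

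The other two parts are likewise only promised. For $r'$ you need the actual identity, e.g.\ the paper's $r'=\mathtt a\,[\mathtt{ab},\mathtt{ABabABabA}]\,\mathtt A$, equivalently $\mathtt Ar'\mathtt a=[y,z^2\mathtt A]$. For $\scl_S(r)=1$ the paper relies on an exact \texttt{scallop} computation. Your sample upper-bound certificate does not work as stated. A surface with one boundary component of degree $n$ has $-\chi/2n=(2g-1)/2n\neq1$, and genus $3$ with $n=2$ gives at best $\tfrac54$.
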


\begin{corollary}\label{cor:simvol}
For \(r=\mathtt{aabABabABBAbaabABBAb}\),
\[
  \lVert G_r\rVert<2
  =4\left(\scl_S(r)-\frac12\right).
\]
Thus the equality in [4, Question~1.2] does not hold for every non-trivial relator in \(F(S)'\).
\end{corollary}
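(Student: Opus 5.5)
The plan is to combine Theorem~\ref{thm:main} with the Heuer--L\"oh estimate [4, Corollary~3.12], applied to the \emph{other} relator \(r'\), and then transport the resulting bound across the isomorphism \(G_r\cong G_{r'}\).

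First, Theorem~\ref{thm:main} gives \(\scl_S(r)=1\), so the right-hand side is \(4(1-\tfrac12)=2\). Next, since \(r'\in F(S)'\setminus\{e\}\) and Theorem~\ref{thm:main} gives \(\scl_S(r')=\tfrac12\), the universal estimate \(\lVert G_{r'}\rVert<4\scl_S(r')\) yields
\[
  \lVert G_{r'}\rVert<4\cdot\tfrac12=2 .
\]

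It remains to show that \(\lVert G_r\rVert=\lVert G_{r'}\rVert\). This is the only step needing care, because the fundamental class is defined via a chosen presentation.

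\begin{itemize}
\item I would first check that neither relator is a proper power in \(F(S)\). Both words are cyclically reduced of length \(20\), so this is a finite check: \(r\) is not \(u^k\) for \(k\in\{2,4,5,10,20\}\), and similarly for \(r'\).
\item By Lyndon's identity theorem, \(H_2(G_r;\mathbb Z)\) and \(H_2(G_{r'};\mathbb Z)\) are then infinite cyclic. The fundamental class of each group is a generator, determined up to sign by the relator.
\item Any isomorphism \(\varphi\colon G_r\to G_{r'}\) induces an isomorphism \(H_2(G_r;\mathbb Z)\to H_2(G_{r'};\mathbb Z)\). It therefore sends the fundamental class to \(\pm\) the fundamental class.
\item The induced map on \(H_2(-;\mathbb R)\) is norm non-increasing for the \(\ell^1\)-seminorm, since it is induced by a chain map sending simplices to simplices. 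The same holds for \(\varphi^{-1}\). Sign does not affect the seminorm.
\end{itemize}

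Hence \(\lVert G_r\rVert=\lVert G_{r'}\rVert<2=4(\scl_S(r)-\tfrac12)\). This shows the equality in [4, Question~1.2] fails for \(r\). All the substantial work, namely the isomorphism and the two scl computations, is already contained in Theorem~\ref{thm:main}.
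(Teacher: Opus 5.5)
Your proposal is correct and is essentially the paper's proof: you apply the Heuer--L\"oh bound [4, Corollary~3.12] to \(r'\) to get \(\lVert G_{r'}\rVert<4\scl_S(r')=2\), and you transfer this to \(G_r\) via the isomorphism of Theorem~\ref{thm:main}. The only difference is that you spell out why \(\lVert\cdot\rVert\) is an isomorphism invariant (neither relator is a proper power, Lyndon's identity theorem gives \(H_2\cong\mathbb Z\), the isomorphism sends the fundamental class to \(\pm\) the fundamental class, and the \(\ell^1\)-seminorm is functorial), whereas the paper simply asserts this invariance.
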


We now record a convenient criterion for checking the isomorphism in Theorem~\ref{thm:main}.

\begin{lemma}\label{lem:cert}
Let \(r\in F(\mathtt a,\mathtt b)\), \(r'\in F(\mathtt x,\mathtt y)\), let \(p,q\in F(\mathtt a,\mathtt b)\), and let \(s,t\in F(\mathtt x,\mathtt y)\). Define homomorphisms
\[
\begin{aligned}
  \varphi\colon F(\mathtt x,\mathtt y)&\longrightarrow F(\mathtt a,\mathtt b),
  &\varphi(\mathtt x)&=p,&\varphi(\mathtt y)&=q,\\
  \psi\colon F(\mathtt a,\mathtt b)&\longrightarrow F(\mathtt x,\mathtt y),
  &\psi(\mathtt a)&=s,&\psi(\mathtt b)&=t.
\end{aligned}
\]
Assume that
\begin{enumerate}
  \item \(\varphi(r')\in\nc r\) and \(\psi(r)\in\nc{r'}\);
  \item \(\psi(p)\mathtt x^{-1},\psi(q)\mathtt y^{-1}\in\nc{r'}\);
  \item \(\varphi(s)\mathtt a^{-1},\varphi(t)\mathtt b^{-1}\in\nc r\).
\end{enumerate}
Then \(\genrel{\mathtt{x,y}}{r'}\cong\genrel{\mathtt{a,b}}r\).
\end{lemma}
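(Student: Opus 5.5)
The plan is to show that \(\varphi\) and \(\psi\) descend to mutually inverse homomorphisms between the one-relator groups. Write \(G=\genrel{\mathtt{a,b}}{r}=F(\mathtt a,\mathtt b)/\nc r\) and \(H=\genrel{\mathtt{x,y}}{r'}=F(\mathtt x,\mathtt y)/\nc{r'}\), with quotient maps \(\pi_G\) and \(\pi_H\).

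\textbf{Step 1: \(\varphi\) and \(\psi\) descend.} Since \(\nc{r'}\) is the normal closure of \(r'\) and \(\varphi\) is a homomorphism, \(\varphi(\nc{r'})\) is contained in the normal closure of \(\varphi(r')\) in \(F(\mathtt a,\mathtt b)\). Indeed, \(\varphi(g r'^{\pm1} g^{-1})=\varphi(g)\varphi(r')^{\pm1}\varphi(g)^{-1}\), and products are mapped to products. By hypothesis~(1), \(\varphi(r')\in\nc r\), and \(\nc r\) is normal, so \(\varphi(\nc{r'})\subseteq\nc r\). Hence \(\pi_G\circ\varphi\) kills \(\nc{r'}\) and induces a homomorphism \(\bar\varphi\colon H\to G\) with \(\bar\varphi(\bar{\mathtt x})=\bar p\) and \(\bar\varphi(\bar{\mathtt y})=\bar q\). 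The same argument, using \(\psi(r)\in\nc{r'}\), gives \(\bar\psi\colon G\to H\).

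\textbf{Step 2: the composites are identities.} Consider \(\bar\psi\circ\bar\varphi\colon H\to H\). On the generator \(\bar{\mathtt x}\) it takes the value \(\bar\psi(\bar p)=\pi_H(\psi(p))\). By hypothesis~(2), \(\psi(p)\mathtt x^{-1}\in\nc{r'}\), so \(\pi_H(\psi(p))=\bar{\mathtt x}\). Likewise \(\bar{\mathtt y}\mapsto\bar{\mathtt y}\). Since \(\bar{\mathtt x},\bar{\mathtt y}\) generate \(H\), and a homomorphism is determined by its values on generators, \(\bar\psi\circ\bar\varphi=\mathrm{id}_H\). Symmetrically, hypothesis~(3) gives \(\bar\varphi\circ\bar\psi=\mathrm{id}_G\).

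Therefore \(\bar\varphi\) is an isomorphism \(H\cong G\) with inverse \(\bar\psi\). There is no real obstacle here. The only point requiring care is the containment of normal closures in Step~1, which follows directly from the description of \(\nc{\cdot}\) as the set of products of conjugates of the relator and its inverse.
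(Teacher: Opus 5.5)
Your proof is correct and follows the paper's argument exactly: hypothesis (1) lets \(\varphi\) and \(\psi\) descend to the one-relator quotients, and hypotheses (2) and (3) show that the induced composites fix the generators, so they are mutually inverse. You also spell out two details the paper leaves implicit, namely the normal-closure containment \(\varphi(\nc{r'})\subseteq\nc r\) and the check on generators.
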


\begin{proof}
The first pair of conditions lets \(\varphi\) and \(\psi\) descend to the two quotient groups. The second and third pairs say that the induced maps fix the respective generators after composition, so they are mutually inverse.
\end{proof}

\begin{proof}[Proof of Theorem~\ref{thm:main}]\small
The two displayed words are visibly cyclically reduced and of length~\(20\). Their exponent sums in both generators vanish, so they lie in \(F(S)'\).

\emph{Stable commutator length.}
The identity
\[
  r'=\mathtt a\,[\mathtt{ab},\mathtt{ABabABabA}]\,\mathtt A
\]
shows that \(r'\) is conjugate to a commutator. Hence \(\scl_S(r')\leq\frac12\), and the Duncan--Howie gap gives equality. An exact computation with Walker's \texttt{scallop}, implementing Calegari's algorithm, gives \(\scl_S(r)=1\); the reproducible computation is included in [5].

\emph{The isomorphism.}
Relabel \(r'\) by \(\mathtt a\mapsto\mathtt x\) and \(\mathtt b\mapsto\mathtt y\), so that
\[
  r'=\mathtt{xxyXYxyXYxyXYYXyxYXy}.
\]
In Lemma~\ref{lem:cert}, take
\[
  p=\mathtt{BAbaa},\quad q=\mathtt{AABabba},\quad
  s=\mathtt{YxyXYxy},\quad t=\mathtt{yxYXy}.
\]
We verify its three sets of hypotheses. Throughout, \(\sim\) denotes cyclic conjugacy, and the initial segment moved in each cyclic shift is shown in \blue{blue}.
Every cyclic conjugate of \(r^{\pm1}\) lies in \(\nc r\), and~likewise~for~\(r'\); thus each product displayed below is an explicit normal-closure certificate.

First, four of the required certificates are single cyclic conjugates:
\begin{align*}
  \varphi(s)\mathtt a^{-1}
    &=\mathtt{ABBAb}\blue{\mathtt{aabABabABBAbaab}}\sim r,
  &\varphi(t)\mathtt b^{-1}
    &=\mathtt{AABabbaBAbaBAA}\blue{\mathtt{BabbaB}}\sim r^{-1},\\
  \psi(p)\mathtt x^{-1}
    &=\mathtt{YxyXYYXyxYXy}\blue{\mathtt{xxyXYxyX}}\sim r',
  &\psi(q)\mathtt y^{-1}
    &=\mathtt{YXyxYXX}\blue{\mathtt{YxyXYxyyxYXyx}}\sim (r')^{-1}.
\end{align*}
Second, \(\varphi(r')\) is a product of three cyclic conjugates of \(r^{\pm1}\):
\[
\begin{aligned}
\varphi(r')={}&
 \underbrace{\mathtt{BAb}\blue{\mathtt{aabABabABBAbaabAB}}}_{\sim r}\,
 \underbrace{\mathtt{abABBAb}\blue{\mathtt{aabABabABBAba}}}_{\sim r}\,
 \underbrace{\mathtt{BAABabbaBAbaBAA}\blue{\mathtt{Babba}}}_{\sim r^{-1}}.
\end{aligned}
\]
Finally, set
\begin{align*}
 w_1&=\mathtt{YxyXYxxyXYxyyxYXXYxyXYxyyxYX},
 &c_1&=\mathtt{XYxyXYYXyxYXyxxyXYxy}\sim r',\\
 w_2&=\mathtt{YxyXYxxyXYxy},
 &c_2&=\mathtt{yxYXXYxyXYxyyxYXyxYX}\sim (r')^{-1},\\
 w_3&=\mathtt{YxyXY},
 &c_3&=\mathtt{xxyXYxyXYxyXYYXyxYXy}=r'.
\end{align*}
Substituting the displayed words and cancelling adjacent inverse pairs gives
\[
  \psi(r)=w_1c_1w_1^{-1}\,w_2c_2w_2^{-1}\,w_3c_3w_3^{-1},
\]
and therefore \(\psi(r)\in\nc{r'}\). Lemma~\ref{lem:cert} now yields \(G_r\cong G_{r'}\).
\end{proof}

Because stable commutator length is invariant under automorphisms of the free group, conjugation, and inversion, Theorem~\ref{thm:main} also shows that no automorphism of \(F(S)\) sends \(r\) to a conjugate of \((r')^{\pm1}\). Thus the isomorphism above only appears after passing to the one-relator quotients.

\begin{proof}[Proof of Corollary~\ref{cor:simvol}]\small
Simplicial volume is an isomorphism invariant, so Theorem~\ref{thm:main} gives \(\lVert G_r\rVert=\lVert G_{r'}\rVert\). By [4, Corollary~3.12],
\[
  \lVert G_r\rVert=\lVert G_{r'}\rVert
  <4\scl_S(r')=2
  =4\left(\scl_S(r)-\frac12\right).\qedhere
\]
\end{proof}

\section*{Computational Methodology}
The search began with cyclically reduced relators in \(F_2'\) whose one-relator groups are isomorphic but which do not lie in the same \(\operatorname{Aut}(F_2)\)-orbit. Claude Desktop with Opus~5 designed and orchestrated an exhaustive search through \(\operatorname{Aut}(F_2)\)-orbits of words of length at most~\(20\). Candidate pairs were filtered using the first homology of low-index subgroups, Alexander polynomials, and counts of homomorphisms to small finite groups. For candidates not separated by these invariants, we constructed explicit isomorphisms and compared stable commutator lengths with \texttt{scallop} [6].

The code, search pipeline, and explicit certificates are available in [5]. The same repository reproduces the computation \(\scl_S(r)=1\) and contains a Lean formalisation of the six identities used above, produced with the assistance of Harmonic's Aristotle [1].

\smallskip
\noindent\textbf{AI use statement.} Claude Desktop with Opus~5 assisted in designing and orchestrating the computational search for candidate counterexamples.

\smallskip
\noindent\textbf{Acknowledgement.} The author thanks Clara Löh and Nicolaus Heuer for their attention to this solution and for their helpful correspondence. The author is grateful to Vasily Ionin for stimulating discussions and careful reading of a draft of this paper.

\vskip 0.4em
{\footnotesize\setlength{\parindent}{0pt}\setlength{\parskip}{0pt}\setlength{\hangindent}{1.4em}
\textbf{References}\par
\hangindent=1.4em [1] T.~Achim et al., \uline{Aristotle: IMO-level automated theorem proving}, preprint (2025), \href{https://arxiv.org/abs/2510.01346}{arXiv:2510.01346}.\par
\hangindent=1.4em [2] D.~Calegari, \href{https://doi.org/10.1090/S0894-0347-09-00634-1}{\uline{Stable commutator length is rational in free groups}}, J.~Amer. Math. Soc. \textbf{22} (2009), no.~4, 941--961.\par
\hangindent=1.4em [3] A.~J.~Duncan and J.~Howie, \href{https://doi.org/10.1007/BF02571522}{\uline{The genus problem for one-relator products of locally indicable groups}}, Math.~Z. \textbf{208} (1991), no.~1, 225--237.\par
\hangindent=1.4em [4] N.~Heuer and C.~L\"oh, \href{https://doi.org/10.2140/agt.2022.22.1615}{\uline{Simplicial volume of one-relator groups and stable commutator length}}, Algebr. Geom. Topol. \textbf{22} (2022), no.~4, 1615--1661.\par
\hangindent=1.4em [5] A.~Semidetnov, \uline{Code, search pipelines and certificates accompanying this paper}, \url{https://github.com/s3midetnov/scl-one-relator}.\par
\hangindent=1.4em [6] A.~Walker, \uline{\texttt{scallop}}, \url{https://github.com/aldenwalker/scallop}.\par}

\end{document}